\title{Conic bundles and iterated root stacks}
\begin{document}
\begin{abstract}
We generalize a classical result by V. G. Sarkisov about conic bundles to the case of a not necessarily algebraically closed perfect field, using iterated root stacks, destackification, and resolution of singularities. More precisely, we prove that whenever resolution of singularities is available, over a general perfect base field, any conic bundle is birational to a standard conic bundle.
\end{abstract}

\author{Jakob Oesinghaus}

\subjclass[2010]{14J10 (Primary) 14A20, 14E05, 14F22 (Secondary)}
\keywords{Conic bundles, root stacks, destackification, resolution of singularities}
\maketitle
\tableofcontents

\section{Introduction}\label{section_intro}
In this paper, we study the geometry of conic bundles, that is, fibrations whose generic fiber is a smooth conic. They have been widely studied in the context of rationality problems, notably the classic result of Artin and Mumford, who computed their Brauer groups in \cite{ArtinMumfordRational} to produce unirational non-rational conic bundles over rational surfaces. In order to better understand these bundles, it is desirable to bring a conic bundle into a standard form where the locus of degeneration can be controlled.

Over an algebraically closed field $k$ and assuming resolution of singularities, a classical result by Sarkisov (\cite{SarkisovConicB}) states every conic bundle $\pi:V \to S$  of irreducible varieties such that $S$ is smooth and $\pi$ is projective can be brought into a standard form. Concretely, this means that there exist smooth varieties $\widetilde{V}$ and $\widetilde{S}$ and a projective morphism $\widetilde{\pi}: \widetilde{V}\to \widetilde{S}$ fitting into a commutative square
\[
\begin{tikzcd} 
\widetilde{V} \arrow[d, "\widetilde{\pi}"] \arrow[r, dashed]   & V\arrow[d, "\pi"]\\
\widetilde{S} \arrow[r] & S
\end{tikzcd}
\]
such that the rational map $\widetilde{V} \dashrightarrow V$ and the projective morphism $\widetilde{S}\to S$ are birational, and such that
\begin{itemize}
\item the generic fiber of $\widetilde{\pi}$ is a smooth conic,
\item the discriminant divisor of $\widetilde{\pi}$ is a simple normal crossing divisor,
\item the general fiber of $\widetilde{\pi}$ along every irreducible component of the discriminant divisor is a singular irreducible reduced conic, and
\item the fibers of $\widetilde{\pi}$ over the singular locus of the discriminant divisor are non-reduced conics, i.e. double lines.
\end{itemize}

\noindent We use root stacks, resolution of singularities, and the destackification procedure (\cite{BerghDestackify}) to generalize Sarkisov's result to a general perfect base field in Theorem \ref{thm_main}. An analogous result for Brauer-Severi surface bundles, i.e. fibrations whose generic fiber is a form of $\bP^2$, has been proven by Kresch and Tschinkel in \cite{KrTschBrauerSeveri}, also using root stack techniques. 
\subsubsection*{Acknowledgements}
I would like to thank my advisor Andrew Kresch for his guidance. I am supported by Swiss National Science Foundation grant 156010.

\vspace{1em}

\section{Preliminaries}\label{section_prelim}
Throughout this section, when $X$ is a Noetherian Deligne-Mumford (DM) stack, we will let $n$ be a positive integer, invertible in the local rings of an \'etale atlas of $X$. A \emph{sheaf} on $X$ is a sheaf of abelian groups on the \'etale site of $X$; all cohomology will be \'etale cohomology.

\subsection{Conic bundles}
\begin{definition}
Let $S$ be a regular scheme such that $2$ is invertible in its local rings. A \emph{regular conic bundle over S} is a flat, projective morphism $\pi: V \to S$ from a regular scheme $V$ such that the generic fiber over every irreducible component is smooth and such that all fibers are isomorphic to a conic in $\bP^2$. A regular conic bundle is called \emph{standard} if $\pi$ is relatively minimal, i.e. if the preimage of an irreducible divisor under $\pi$ is an irreducible divisor; equivalently, if there exists a reduced divisor $D\subset S$ whose singular locus is regular, such that
\begin{itemize}
\item The morphism $\pi$ is smooth over $S \setminus D$ and the generic fiber over every irreducible component is a smooth conic.
\item The generic fiber of $\pi$ over every irreducible component of $D$ is a singular reduced irreducible conic, i.e. the union of two lines with conjugate slopes.
\item The fiber of $\pi$ over every point of $D^\sing$ is non-reduced, i.e. a double line.
\end{itemize}
\end{definition}

We remark that we put no further requirements on $D$, although our construction actually produces standard conic bundles with simple normal crossing discriminant divisor.

\subsection{Gerbes}
Let $X$ be a Noetherian Deligne-Mumford stack.
\begin{definition}
A gerbe over $X$ banded by $\mu_n$, or simply a $\mu_n$-gerbe over $X$, is the data of a Deligne-Mumford stack $H$ and a morphism $H \to X$ that is \'etale locally isomorphic to a product with $B\mu_n$, together with compatible identifications of the automorphism groups of local sections with $\mu_n$.
\end{definition}
We can classify $\mu_n$-gerbes by their class in $H^2(X,\mu_n)$. We use the notion of the \emph{residual gerbe} $\cG_x$ of $X$ at a point $x\in |X|$, an \'etale gerbe over the residue field $\kappa(x)$ satisfying certain universal properties (\cite[App. B]{Rydh-devissage}).

\begin{lemma}\label{lemma_coh_residual_gerbe}
Let $x\in |X|$. Then $H^1(\cG_x, \bZ) = 0$.
\end{lemma}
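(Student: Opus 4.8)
The plan is to reduce the computation to the étale cohomology of the residue field $\kappa(x)$, using two features of $\cG_x$: it is connected, being a gerbe over $\operatorname{Spec}\kappa(x)$, and rationally it is cohomologically indistinguishable from its coarse space $\operatorname{Spec}\kappa(x)$ because its automorphism groups are finite. First I would observe that, since $\cG_x$ is connected, the constant sheaves give $H^0(\cG_x,\bQ)=\bQ$ and $H^0(\cG_x,\bQ/\bZ)=\bQ/\bZ$ with the evident surjection between them. Feeding the short exact sequence $0\to\bZ\to\bQ\to\bQ/\bZ\to 0$ into the long exact cohomology sequence, the connecting map $H^0(\cG_x,\bQ/\bZ)\to H^1(\cG_x,\bZ)$ vanishes, so $H^1(\cG_x,\bZ)$ injects into $H^1(\cG_x,\bQ)$. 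It therefore suffices to prove $H^1(\cG_x,\bQ)=0$.

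Let $f\colon\cG_x\to\operatorname{Spec}\kappa(x)$ be the morphism exhibiting $\cG_x$ as a gerbe, i.e. the coarse space morphism. After a finite separable extension $\kappa(x)\to L$ trivializing the gerbe, $\cG_x\times_{\kappa(x)}L\cong BA$ for a finite étale group scheme $A$ over $L$; passing further to a separable closure, $BA$ becomes the classifying stack of a finite constant group, whose positive-degree cohomology with $\bQ$-coefficients vanishes (multiplication by the order of the group is simultaneously invertible and, via restriction–corestriction, zero). Since forming higher direct images commutes with the cofiltered limit of the field extensions involved, this shows $R^0f_*\bQ=\bQ$ and $R^qf_*\bQ=0$ for $q>0$. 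The Leray spectral sequence for $f$ then degenerates to $H^1(\cG_x,\bQ)\cong H^1_{\mathrm{\acute et}}(\operatorname{Spec}\kappa(x),\bQ)$. Finally, $H^1_{\mathrm{\acute et}}(\operatorname{Spec}\kappa(x),\bQ)=H^1_{\mathrm{cont}}(\operatorname{Gal}(\kappa(x)^{\mathrm{sep}}/\kappa(x)),\bQ)=\operatorname{Hom}_{\mathrm{cont}}(\operatorname{Gal}(\kappa(x)^{\mathrm{sep}}/\kappa(x)),\bQ)$, which is $0$ since a continuous homomorphism from a profinite (hence compact) group to the discrete torsion-free group $\bQ$ has finite, hence trivial, image. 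Combining the two steps gives $H^1(\cG_x,\bZ)=0$.

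The one point that warrants care is the vanishing $R^{>0}f_*\bQ=0$: proper base change is unavailable for the non-torsion sheaf $\bQ$, so one cannot simply quote a base change theorem for the proper morphism $f$. The resolution is that the only non-triviality of $f$ lives over the \emph{field} $\kappa(x)$ and is killed by a finite separable extension, and base change along (cofiltered limits of) field extensions is valid for arbitrary abelian sheaves on qcqs stacks; equivalently, one may invoke the standard fact that the coarse-space morphism of a Deligne–Mumford stack with finite inertia induces an isomorphism on $\bQ$-cohomology. This is the step where the Deligne–Mumford (finite automorphism) hypothesis on $X$, inherited by $\cG_x$, is genuinely used.
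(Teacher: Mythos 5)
Your proof is correct and follows essentially the same route as the paper's: both run the Leray spectral sequence for $f\colon \cG_x\to\Spec\kappa(x)$ and trivialize the gerbe over a finite separable extension to reduce to $\B G$ for a finite \'etale group scheme $G$. The only divergence is the final vanishing: the paper concludes directly from $H^1(\B G,\bZ)=\Hom(G,\bZ)=0$ (there is no nonzero homomorphism from a finite group to the torsion-free group $\bZ$), whereas you detour through $\mathbb{Q}$-coefficients via $0\to\bZ\to\mathbb{Q}\to\mathbb{Q}/\bZ\to 0$ and a restriction--corestriction argument, which is valid but not needed in degree $1$.
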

\begin{proof}The Leray spectral sequence for $f: \cG_x \to \Spec \kappa(x)$ gives a monomorphism
\[
H^1(\cG_x,\bZ) \to H^0(\Spec \kappa(x), R^1 f_*\bZ).
\]
Let $K/\kappa(x)$ be a finite separable extension such that the gerbe
\[
\cY := \Spec K\times_{\Spec \kappa(x)}\cG_x
\]
has a section. This implies that $\cY\cong\B G$ for a finite \'etale group scheme $G$ over $K$. But then
\[
H^1(\cY, \bZ) = H^1(\B G, \bZ) = \Hom(G,\bZ) = 0.
\]
Hence, we have $R^1f_*\bZ=0$.
\end{proof}

We will also frequently make use of the \emph{Kummer sequence}
\[
0 \to \mu_n \to \bG_m \xrightarrow{(\cdot)^n} \bG_m \to 0,
\]
which is an exact sequence of sheaves on $X$.

Suppose now that $X$ is regular and integral, with trivial stabilizer at the generic point $\iota_\eta :\Spec(\eta) \to X$. By \cite[(2)-(3)]{GB2}, the following is an exact sequence of sheaves on $X$:
\begin{equation}\label{eq_exact_sequence_iotaeta}
0 \to \bG_m \to (\iota_\eta)_*\bG_m \to \bigoplus_{x\in X^{(1)}} (\iota_x)_*\bZ \to 0.
\end{equation}
By Lemma \ref{lemma_coh_residual_gerbe}, this implies that $H^2(X,\bG_m) \to H^2(X,(\iota_\eta)_*\bG_m)$ is injective. The Leray spectral sequence for $\iota_\eta$ and Hilbert's Theorem 90 imply that
\[
H^2(X, (\iota_\eta)_*\bG_m) \to H^2(\Spec(\eta), \bG_m)
\]
is injective. Hence, we can infer that $H^2(X,\bG_m)$ is a torsion group, which we call the \emph{Brauer group} of $X$. This motivates the following definition.
\begin{definition}
The \emph{Brauer group} $\Br(X)$ of a Noetherian DM stack $X$ is defined to be the torsion subgroup of $H^2(X,\bG_m)$.
\end{definition}
This reduces to the classical definition of the Brauer group	 if $X$ is the spectrum of a field.

\subsection{Root stacks and the Brauer group}
Given effective Cartier divisors
\[
D_1, \dots, D_\ell
\]
on a Noetherian scheme or Deligne-Mumford stack $X$, we can define the \emph{iterated $n$-th root stack} of $X$ along those divisors (\cite{CadmanTangency}), denoted by
\[
\sqrt[n]{(X,\{D_1,\dots,D_\ell\})} \to X.
\]
This construction adds stacky structure along the divisors, and is an isomorphism outside of the union of divisors. It should be noted that when any intersection $D_i\cap D_j$ with $i\ne j$ is nonempty, this is not isomorphic to the root stack along the union of the divisors. For any $i\in \{1,\dots,\ell\}$, the iterated root stack 
\[
\sqrt[n]{(X,\{D_1,\dots,\widehat{D_i},\dots,D_\ell\})}
 \]
is a relative coarse moduli space for $\sqrt[n]{(X,\{D_1,\dots,D_\ell\})}$ in the sense of \cite[\S 3]{TwistedStable}. In particular, a locally free sheaf on $\sqrt[n]{(X,\{D_1,\dots,D_\ell\})}$ such that the associated linear $\mu_n$-representation is trivial at a general point of any irreducible component of $D_i$ descends to
\[
\sqrt[n]{(X,\{D_1,\dots,\widehat{D_i},\dots,D_\ell\})}.
\]
To prove this, we can apply a relative version of \cite[Thm. 10.3]{AlperGoodModuli} for good moduli space morphisms, which are relative versions of good moduli spaces. Note that
\begin{enumerate}
\item the condition of trivial stabilizer actions at closed points is replaced by trivial action of the \emph{relative} inertia stack at closed points;
\item due to the construction of root stacks, this action will be trivial if it is trivial at a general point of every irreducible component of $D_i$;
\item since the stacks involved are tame, the relative coarse moduli space mentioned above is a good moduli space morphism.
\end{enumerate}
We will use this several times throughout.

\begin{lemma}\label{lemma_Brauer_root_stack}
Let $S$ be a regular integral Noetherian scheme of dimension $1$, let $n$ be invertible in the local rings of $S$, and let $D_1, \dots, D_\ell$ be distinct closed points of $S$. Then we have 
\[
\Br(\sqrt[n]{(S,\{D_1,\dots,D_\ell\}})[n] \cong \Br(S\setminus \{D_1,\dots,D_\ell\})[n].
\]
\end{lemma}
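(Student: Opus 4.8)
The plan is to compute both Brauer groups via the Kummer sequence and compare the resulting $H^2(-,\mu_n)$ groups, exploiting the fact that on a root stack along codimension-one points of a regular curve the relevant cohomology is controlled by the complement $U := S \setminus \{D_1,\dots,D_\ell\}$ together with the stacky residual gerbes. Write $\mathscr{X} := \sqrt[n]{(S,\{D_1,\dots,D_\ell\})}$ and let $j : U \hookrightarrow \mathscr{X}$ be the open immersion (note $U$ embeds in $\mathscr{X}$ since the root stack construction is an isomorphism away from the $D_i$), with $\mathscr{D}_i \subset \mathscr{X}$ the residual gerbe lying over $D_i$, each of the form $B\mu_n$ over $\kappa(D_i)$. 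First I would record that since $S$ is regular integral of dimension one, $\mathscr{X}$ is a regular integral Noetherian DM stack of dimension one with trivial generic stabilizer, so the discussion around \eqref{eq_exact_sequence_iotaeta} applies and both $\Br(\mathscr{X})$ and $\Br(U)$ are the full torsion subgroups of $H^2(-,\bG_m)$; in particular the $n$-torsion subgroups are what we must match.

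The key step is to use the Kummer sequence $0 \to \mu_n \to \bG_m \xrightarrow{n} \bG_m \to 0$ on $\mathscr{X}$ and on $U$. Because $\mathscr{X}$ and $U$ are dimension-one regular stacks, $H^1(-,\bG_m) = \Pic$ and the cokernel of multiplication by $n$ on $H^2(-,\bG_m)$ vanishes on the $n$-torsion after the long exact sequence, so one gets
\[
0 \to \Pic(\mathscr{X})/n \to H^2(\mathscr{X},\mu_n) \to \Br(\mathscr{X})[n] \to 0
\]
and likewise for $U$. Thus it suffices to show (i) $H^2(\mathscr{X},\mu_n) \cong H^2(U,\mu_n)$ and (ii) the map $\Pic(\mathscr{X})/n \to \Pic(U)/n$ is an isomorphism, so that restriction $\Br(\mathscr{X})[n] \to \Br(U)[n]$ is forced to be an isomorphism by the five lemma. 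For (i), I would analyze the restriction map via the localization/excision triangle for the closed substacks $\mathscr{D}_i \subset \mathscr{X}$ with open complement $U$: the relevant local cohomology groups $H^j_{\mathscr{D}_i}(\mathscr{X},\mu_n)$ are computed by absolute cohomological purity on the regular stack $\mathscr{X}$ along the (codimension-one, but stacky) $\mathscr{D}_i$, giving a Gysin shift by the gerbe $\mathscr{D}_i \cong B\mu_n$ over $\kappa(D_i)$; Lemma \ref{lemma_coh_residual_gerbe} (which gives $H^1(\mathscr{D}_i,\bZ)=0$, hence controls $H^1(\mathscr{D}_i,\mu_n)$ and $H^0(\mathscr{D}_i,\mu_n)=\mu_n(\kappa(D_i))$) pins down these contributions and shows that the map $H^2(\mathscr{X},\mu_n)\to H^2(U,\mu_n)$ has kernel and cokernel built only from $\Pic$-type terms, which then cancel against (ii).

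For (ii), I would observe that $\Pic(\mathscr{X})$ is generated by $\Pic(S)$ together with the tautological $n$-th root line bundles $\mathcal{O}(\tfrac{1}{n}D_i)$, which satisfy $\mathcal{O}(\tfrac{1}{n}D_i)^{\otimes n} \cong \mathcal{O}(D_i)$; restricting to $U$ kills each $\mathcal{O}(D_i)$ (it becomes trivial on the complement) and each root bundle becomes an $n$-torsion class, so modulo $n$ the map $\Pic(\mathscr{X})/n \to \Pic(U)/n$ is identified with $\Pic(S)/n \to \Pic(U)/n$, which is surjective with kernel generated by the classes $[D_i]$ — and these are exactly the classes that are divisible by $n$ in $\Pic(\mathscr{X})$ via the root bundles. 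Tracking this carefully shows the $\Pic/n$ terms contribute the same on both sides. The main obstacle I anticipate is making the purity/Gysin computation on the stacky divisors $\mathscr{D}_i$ precise — i.e. establishing absolute cohomological purity for the regular tame DM stack $\mathscr{X}$ along the $\mu_n$-gerbe $\mathscr{D}_i$ and correctly bookkeeping the resulting exact sequence — since this is where the stack structure genuinely enters and where one must combine Lemma \ref{lemma_coh_residual_gerbe} with the structure of root stacks as coarse-moduli towers; an alternative, possibly cleaner, route is to avoid purity altogether by exhibiting $\Br(\mathscr{X})[n]$ directly as classes unramified on $U$ using the exact sequence \eqref{eq_exact_sequence_iotaeta} on $\mathscr{X}$ and on $S$ side by side, comparing residue maps at the $D_i$, which again reduces to Lemma \ref{lemma_coh_residual_gerbe} controlling the stacky residues.
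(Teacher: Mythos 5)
Your primary route contains a false intermediate claim, and the gap it leaves is exactly the content of the lemma. Claim (ii), that $\operatorname{Pic}(\mathscr{X})/n \to \operatorname{Pic}(U)/n$ is an isomorphism, fails in general: the tautological root bundles restrict to the \emph{trivial} bundle on $U$ (not merely to $n$-torsion classes), while their classes are typically nonzero in $\operatorname{Pic}(\mathscr{X})/n$. For instance, for $S=\mathbb{A}^1$ over an algebraically closed field with one marked point, $\operatorname{Pic}(\mathscr{X})/n\cong\bZ/n\bZ$ whereas $\operatorname{Pic}(U)=0$; in the same example $H^2(\mathscr{X},\mu_n)\cong\bZ/n\bZ$ while $H^2(U,\mu_n)=0$, so claim (i) fails too, and the five-lemma argument as you set it up does not apply. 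You do acknowledge afterwards that the two defects should ``cancel,'' but nothing in the proposal establishes this cancellation, and the purity bookkeeping you defer is not a routine obstacle: the local term at the stacky point is $H^1(\cG_{x_i},\bZ/n\bZ)$, which is strictly larger than the term $H^1(\kappa(D_i),\bZ/n\bZ)$ appearing in the localization sequence for $S$, so the two sequences cannot be matched term by term.

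The missing idea --- and the heart of the paper's argument --- is the behaviour of the residue map at the stacky codimension-one points. The paper uses the exact sequence $0\to\Br(X)[n]\to\Br(\eta)[n]\to\bigoplus_{x}H^1(\cG_x,\bZ/n\bZ)$ derived from \eqref{eq_exact_sequence_iotaeta} via Lemma \ref{lemma_coh_residual_gerbe}, written simultaneously for the root stack, for $S$, and for $S\setminus\{D_1,\dots,D_\ell\}$. The decisive point, taken from \cite[\S 3.2]{PeriodIndexArithmeticSurface}, is that the residue of a class of $\Br(\eta)[n]$ at the point $x_i$ of the root stack over $D_i$ factors through multiplication by the ramification index $n$ and therefore vanishes identically on $n$-torsion, while the residues at all other codimension-one points agree with those of $S\setminus\{D_1,\dots,D_\ell\}$; comparing kernels gives the isomorphism. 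Your closing ``alternative route'' is precisely this argument, but you stop at ``comparing residue maps at the $D_i$'' without identifying why the stacky residues vanish; Lemma \ref{lemma_coh_residual_gerbe} only identifies the target of the residue map, not its vanishing. Whichever route you take, this ramification-index computation must be supplied, and once it is, the Kummer-sequence detour through $\operatorname{Pic}$ and $H^2(-,\mu_n)$ becomes unnecessary.
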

\begin{proof}
Let $X$ be any regular integral Noetherian DM stack with trivial generic stabilizer such that $\dim(X)=1$, and such that $n$ is invertible in the local rings of an \'etale atlas of $X$. Then the results from \cite[2.]{GB3} and the Leray spectral sequence for $\iota_\eta$ imply that $H^2(X, (\iota_\eta)_*\bG_m)[n] \cong \Br(\eta)[n]$. Moreover, if $x\in X^{(1)}$, the vanishing of $H^1(\cG_x, \bZ)$ implies that
\[
H^1(\cG_x, \bZ / n\bZ) \cong H^2(\cG_x, \bZ)[n].
\]
The long exact sequence of cohomology of \eqref{eq_exact_sequence_iotaeta} then gives rise to an exact sequence
\begin{equation}\label{eq_exact_sequence_iotaeta_1d}
0 \to \Br(X)[n] \to \Br(\eta)[n] \to \bigoplus_{x\in X^{(1)}} H^1(\cG_x, \bZ / n\bZ).
\end{equation}
Taking $X=\sqrt[n]{(S,\{D_1,\dots,D_\ell\})}$, with codimension $1$ point $x_i$ over $D_i$ for all $i$, we compare the exact sequence \eqref{eq_exact_sequence_iotaeta_1d} with the analogous exact sequence for $S$ (loc. cit.) to obtain the vanishing of the right-hand map in \eqref{eq_exact_sequence_iotaeta_1d} after projection to the factor $x_i$ for any $i$ \cite[\S 3.2]{PeriodIndexArithmeticSurface}. Comparison with the exact sequence for $S\setminus\{D_1,\dots,D_\ell\}$ gives the result.
\end{proof}

\begin{lemma}\label{lemma_Brauer_codim2}
Let $k$ be a field and let $X$ be integral, smooth and of finite type over $k$ with trivial generic stabilizer. For any positive integer $n$ with $\chara(k) \nmid n$ and open substack $U \subset X$ whose complement has codimension at least $2$, we have $H^2(X,\mu_n)\cong H^2(U, \mu_n)$, and therefore $\Br(X)[n] \cong \Br(U)[n]$.
\end{lemma}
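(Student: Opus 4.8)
The claim reduces to showing that the restriction map $H^2(X,\mu_n) \to H^2(U,\mu_n)$ is an isomorphism. Write $Z := X \setminus U$, a closed subset of codimension $\geq 2$. The localization long exact sequence for \'etale cohomology with supports in $Z$ reads
\[
\cdots \to H^i_Z(X,\mu_n) \to H^i(X,\mu_n) \to H^i(U,\mu_n) \to H^{i+1}_Z(X,\mu_n) \to \cdots,
\]
so it suffices to prove the vanishing $H^i_Z(X,\mu_n)=0$ for $i\leq 3$; this then gives the desired isomorphism in degree $2$ (and indeed in all degrees $\leq 2$, with injectivity in degree $3$).

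The vanishing of $H^i_Z(X,\mu_n)$ for $i<4$ is a form of cohomological purity in codimension $2$, and I would deduce it from Gabber's absolute purity theorem by d\'evissage on $Z$. Since \'etale cohomology with supports in $Z$ agrees with that for $Z_{\mathrm{red}}$, we may replace $Z$ by $Z_{\mathrm{red}}$. As $X$ is of finite type over a field, it is excellent, hence $Z_{\mathrm{red}}$ admits a finite filtration by closed subsets $Z = Z_0 \supset Z_1 \supset \cdots \supset Z_m = \emptyset$ with each locally closed stratum $Z_j \setminus Z_{j+1}$ regular; breaking the strata into irreducible components we may further assume each stratum is irreducible, and since $X$ is integral of finite type over $k$, hence equidimensional and catenary, each stratum has pure codimension $\geq 2$ in $X$, hence also in the regular open $X\setminus Z_{j+1}$ in which it sits as a closed subscheme. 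Gabber's absolute purity identifies, for a regular closed immersion $W \hookrightarrow X'$ of pure codimension $c$ into a regular scheme $X'$ with $n$ invertible, the complex $R\Gamma_W(X',\mu_n)$ with $R\Gamma(W, \mu_n^{\otimes(1-c)})[-2c]$, which is concentrated in cohomological degrees $\geq 2c \geq 4$; in particular $H^i_W(X',\mu_n) = 0$ for $i\leq 3$. Propagating these vanishings through the excision distinguished triangles
\[
R\Gamma_{Z_{j+1}}(X,\mu_n) \to R\Gamma_{Z_j}(X,\mu_n) \to R\Gamma_{Z_j\setminus Z_{j+1}}(X\setminus Z_{j+1},\mu_n) \xrightarrow{+1}
\]
and inducting downward on $j$ yields $H^i_Z(X,\mu_n)=0$ for $i\leq 3$, and hence $H^2(X,\mu_n)\cong H^2(U,\mu_n)$.

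For the statement on Brauer groups I would compare the Kummer sequences of $X$ and of $U$. Since an $n$-torsion class in $H^2(-,\bG_m)$ is in particular torsion, $H^2(X,\bG_m)[n] = \Br(X)[n]$, and with $H^1(X,\bG_m)=\operatorname{Pic}(X)$ the long exact sequence attached to the Kummer sequence gives a short exact sequence
\[
0 \to \operatorname{Pic}(X)/n \to H^2(X,\mu_n) \to \Br(X)[n] \to 0,
\]
and likewise for $U$ (which is again integral, smooth over $k$ with trivial generic stabilizer), compatibly with restriction. Because $X$ is regular, hence locally factorial, and $Z$ has codimension $\geq 2$, restriction induces an isomorphism $\operatorname{Pic}(X)\xrightarrow{\sim}\operatorname{Pic}(U)$ (removing a closed subset of codimension $\geq 2$ does not change the divisor class group of a Noetherian integral locally factorial scheme). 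Together with $H^2(X,\mu_n)\cong H^2(U,\mu_n)$ proved above, the five lemma yields $\Br(X)[n]\cong\Br(U)[n]$.

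The main obstacle is the d\'evissage step: absolute purity is cleanly available only for \emph{regular} closed subschemes, whereas here $Z$ is an arbitrary closed subset of codimension $\geq 2$, so one must stratify $Z$ into regular pieces and transport the vanishing of local cohomology along the excision triangles of the stratification. The remaining ingredients — the localization sequence, the Kummer sequences, and the codimension-$\geq 2$ invariance of $\operatorname{Pic}$ for a regular integral scheme — are standard. (Note that no perfectness hypothesis on $k$ is needed, since $X$ smooth over $k$ is regular and finite type over a field is excellent.)
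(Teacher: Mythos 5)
Your argument is essentially correct and follows the same strategy as the paper: kill $H^i_Z(X,\mu_n)$ for $i\le 3$ by purity plus d\'evissage on $Z$, then conclude via the localization sequence and the Kummer sequence. The differences are in the choice of inputs. The paper first reduces to a perfect base field (via Milne, Rem.~II.3.17) precisely so that, after shrinking and iterating over large open substacks, the complement can be taken \emph{smooth over $k$} of constant codimension, which is what the smooth-pair purity of SGA~4, Exp.~XVI requires; you instead stratify $Z_{\mathrm{red}}$ into regular pieces and invoke Gabber's absolute purity, which lets you dispense with the perfectness reduction (as you note). Your explicit Kummer-sequence-plus-$\operatorname{Pic}$ argument for passing from $H^2(-,\mu_n)$ to $\Br(-)[n]$ spells out what the paper delegates to the citation of Grothendieck's Le groupe de Brauer~III, Cor.~6.2. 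Both routes are sound.

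The one point you should address is that $X$ is a Deligne--Mumford stack, not a scheme: your appeals to excellence, to Gabber's theorem for regular closed immersions of \emph{schemes}, and to the codimension-$\ge 2$ invariance of the divisor class group of a locally factorial \emph{scheme} are all scheme-level statements. The standard fix --- and the reason the paper phrases purity in terms of the local cohomology \emph{sheaves} $\underline{H}^i_Y(X,\mu_n)$ and then feeds them into the local-to-global spectral sequence for cohomology with supports --- is that these sheaves are \'etale-local, so their vanishing can be verified on a scheme atlas and then transported to $H^i_Z(X,\mu_n)$; similarly, the extension of a line bundle across a codimension-$\ge 2$ closed substack is checked on an atlas and descended. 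With that adjustment (or by citing stacky versions of purity and excision directly), your proof goes through.
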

\begin{proof}
By \cite[Rem II.3.17]{MilneEtale} there is no loss of generality in assuming that $k$ is perfect. By shrinking $X$ if necessary and iterating the process for large open substacks of $X$, we can assume that the complement $Y=X \setminus U$ is smooth and of constant codimension $c\geq 2$ everywhere. In this situation, we know by \cite[\S XVI.3]{SGA4-3} that $\underline{H}^i_Y(X,\mu_n) = 0$ for $i\neq 2c$. Combining this with the exact sequence for cohomology with support (\cite[Prop III.1.25]{MilneEtale}) and the local-to-global spectral sequence (\cite[\S VI.5]{MilneEtale}) gives the result; cf. \cite[Cor 6.2]{GB3}.
\end{proof}

\section{Proof the of the main result}\label{section_proof}
Here we state and prove the main result.

\begin{theorem}\label{thm_main}
Let $k$ be a perfect field of characteristic different from $2$ and $S$ a smooth projective algebraic variety over $k$.
Assume that embedded resolution of singularities for reduced subschemes of $S$ of pure codimension $1$ and desingularization of reduced finite-type Deligne-Mumford stacks of pure dimension equal to $\dim(S)$ are known.
Let
\[
\pi:V\to S
\]
be a morphism of projective varieties over $k$ whose generic fiber is a
smooth conic.
Then there exists a commutative diagram
\[
\begin{tikzcd} 
\widetilde{V} \arrow[d, "\widetilde{\pi}"] \arrow[r, dashed, "\rho_V"]   & V\arrow[d, "\pi"]\\
\widetilde{S} \arrow[r, "\rho_S"] & S
\end{tikzcd}
\]
where $\rho_S$ is a projective birational morphism,
$\rho_V$ is a birational map, and
$\widetilde\pi$ is a standard conic bundle with simple normal crossing discriminant divisor.
\end{theorem}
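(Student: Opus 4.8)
The plan is to discard $V$ except for its generic fibre, which records a $2$-torsion Brauer class, and to rebuild a good model using root stacks (to make that class unramified), destackification (to return to a scheme), and resolution (to fix up the total space). The generic fibre $V_\eta$ is a smooth conic over $K:=k(S)$, so it determines $\alpha\in\Br(K)[2]$, with $\alpha=0$ iff $V_\eta\cong\mathbb P^1_K$. If $\alpha=0$ then $V$ is birational over $S$ to $\mathbb P^1_k\times S$, which is already a standard conic bundle over $S$ with empty discriminant, so assume $\alpha\neq0$. By purity, $\alpha$ extends over the complement of a reduced divisor $R\subset S$ (its ramification divisor); applying the hypothesised embedded resolution to $R$, iterated on blow-ups, gives a projective birational morphism $S_1\to S$ with $S_1$ smooth and the ramification divisor $D=D_1\cup\dots\cup D_\ell$ of $\alpha$ on $S_1$ a simple normal crossing divisor.

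Form the root stack $\mathcal S_1:=\sqrt[2]{(S_1,\{D_1,\dots,D_\ell\})}$. On a $\mu_2$-atlas a local equation $t_i$ of $D_i$ becomes a square $s_i^2$, so a symbol presentation $\alpha=(t_i,u)+\beta$ (with $\beta$ unramified along $D_i$) pulls back to $(s_i^2,u)+\beta=2(s_i,u)+\beta=\beta$; thus $\alpha$ has vanishing residue along every stacky divisor, and being unramified elsewhere it extends by purity to $\widetilde\alpha\in\Br(\mathcal S_1)[2]$ (a higher-dimensional counterpart of Lemma~\ref{lemma_Brauer_root_stack}). As $\widetilde\alpha$ has index $2$ it is represented by a quaternion Azumaya algebra, whose scheme of isotropic lines is a smooth conic bundle $\mathcal C_1\to\mathcal S_1$ with generic fibre $\cong V_\eta$. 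Now apply destackification \cite{BerghDestackify} to the smooth stack $\mathcal S_1$: there is a sequence of stacky blow-ups $\mathcal S'\to\mathcal S_1$ with smooth coarse space $X'$ such that $\pi\colon\mathcal S'\to X'$ exhibits $\mathcal S'$ as an iterated second root stack along a simple normal crossing divisor $E=\sum_j E_j$ on $X'$ (the stabilizers being $2$-torsion, only $\mu_2$-roots occur and the gerbe is trivial). We obtain projective birational $X'\to S_1\to S$ and, pulling back, a smooth conic bundle $\mathcal C'\to\mathcal S'$ with quadratic datum $(\mathcal E',\mathcal L',q')$, still with generic fibre $\cong V_\eta$.

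The crux is to descend $\mathcal C'$ to a conic bundle on $X'$, which must be degenerate along $E$ since $\alpha$ ramifies there. Take $\mathcal E':=\pi_*\omega_{\mathcal C'/\mathcal S'}^{-1}$, and by twisting by a fractional line bundle $\mathcal O_{\mathcal S'}(\sum_j c_j\sqrt{E_j})$ arrange the $\mu_2$-weights of $\mathcal E'$ at the generic point of each $\sqrt{E_j}$ to be minimal, i.e.\ the multiset $(0,0,0)$ or $(0,0,1)$ — the weight of $\mathcal L'$ being forced to $0$ by equivariance and nondegeneracy in odd rank. Then $\overline{\mathcal E}:=\pi_*\mathcal E'$ and $\overline{\mathcal L}:=\pi_*\mathcal L'$ (after the twist) are locally free of ranks $3$ and $1$ (using $\pi_*\mathcal O_{\mathcal S'}=\mathcal O_{X'}$ and that $\pi_*$ preserves rank on fractional line bundles of weight in $[0,2)$), and $\overline q:=\pi_*q'$ is a rank-$3$ quadratic form cutting out a conic bundle $V':=V(\overline q)\subset\mathbb P(\overline{\mathcal E})\to X'$ with generic fibre $\cong V_\eta$. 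A local toric computation of $\pi_*$ on a weight-$(0,0,1)$ summand shows $\overline q$ drops to rank exactly $2$ at a general point of each $E_j$, with residual quadratic extension the residue of $\alpha$ along $E_j$ — so the fibre is a pair of conjugate lines — and to rank $\le1$ precisely over the higher-codimension strata of $E$, where the fibre is a double line. Hence $V'\to X'$ is smooth over $X'\setminus E$, relatively minimal, with discriminant the simple normal crossing divisor $\bigcup\{E_j:\alpha\text{ ramifies along }E_j\}$, and $V'$ is regular away from $E^{\mathrm{sing}}$.

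Finally, apply the hypothesised desingularization to $V'$ — resolving the $A_1$-type singularities over $E^{\mathrm{sing}}$, passing through the intermediate reduced Deligne--Mumford stacks on which this is most naturally carried out compatibly with the conic bundle structure — to obtain projective birational $\widetilde V\to V'$ with $\widetilde V$ regular, an isomorphism over $X'\setminus E^{\mathrm{sing}}$; after at worst a further embedded resolution of $E^{\mathrm{sing}}$ on $X'$ and a relative minimalization the fibres over the singular locus of the discriminant become double lines. Setting $\widetilde S\to X'\to S$, $\widetilde\pi\colon\widetilde V\to\widetilde S$, and letting $\rho_S$ be the resulting projective birational morphism and $\rho_V\colon\widetilde V\dashrightarrow V$ the birational map through the common generic fibre $V_\eta$, gives the theorem. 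I expect the main obstacle to be the descent across $\mathcal S'\to X'$: choosing the fractional twist correctly and proving by explicit local analysis that the resulting conic bundle degenerates exactly as a standard conic bundle must — conjugate lines over the components of the discriminant, double lines only over its singular locus, and nothing worse; the termination and standardness bookkeeping in the cleanup step is a secondary technical point.
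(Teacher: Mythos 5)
Your overall strategy (extract the $2$-torsion class $\alpha$ of the generic fibre, resolve its ramification divisor to simple normal crossings, pass to the iterated square root stack to kill the residues, destackify, push the anticanonical sheaf down to the coarse space, and check standardness against the ramification of $\alpha$) is the same as the paper's. But there are two genuine gaps. The first is the sentence ``As $\widetilde\alpha$ has index $2$ it is represented by a quaternion Azumaya algebra'' on all of $\mathcal S_1$. A $2$-torsion Brauer class whose generic index is $2$ is represented by a degree-$2$ Azumaya algebra only on an open substack whose complement has codimension $\ge 3$ (the reflexive hull of the rank-$2$ twisted sheaf need not be locally free when $\dim S\ge 3$), so your smooth conic bundle $\mathcal C_1\to\mathcal S_1$ does not exist globally without further work. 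The paper deals with exactly this: it trivializes the class on a $\mu_2$-gerbe, extends the resulting rank-$2$ sheaf via a morphism to a Grassmannian/Quot scheme, and applies the hypothesized desingularization of DM stacks to the closure of the image to produce a modified base $Y$ over which the twisted sheaf \emph{is} locally free. Note that this is where that hypothesis is actually consumed; in your argument it is only invoked at the end, for the total space, which is the second problem.

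The second gap is the treatment of the deeper strata and of standardness. The paper performs three preparatory reductions before compactifying: it discards stacky divisors where the $\mu_2\to\mathrm{PGL}_2$ representation is trivial, blows up so that $\mu_2\times\mu_2$ acts faithfully at every pairwise intersection, and eliminates triple intersections. These guarantee that the closure of the conic bundle in $\mathbb{P}(i_*\pi_{0*}\omega^\vee)$ has the local model $xu^2+x'v^2-w^2=0$ at intersection points, which is \emph{regular}, with double-line fibres over $E^{\mathrm{sing}}$ --- no resolution of the total space is ever needed. Your fallback of resolving $V'$ over $E^{\mathrm{sing}}$ and then performing a ``relative minimalization'' begs the question: contracting fibre components of a threefold (or higher-dimensional) conic bundle over a non-closed field back into standard form is essentially the original difficulty in Sarkisov's theorem, not a secondary cleanup. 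Relatedly, your identification of the discriminant with $\bigcup\{E_j:\alpha\text{ ramifies along }E_j\}$ is unjustified: destackification can leave stacky divisors along which $\alpha$ is unramified but the $\mu_2$-weights of $\pi_*\omega^{-1}$ are still $(0,0,1)$ after any line-bundle twist (the obstruction lives on the gerbe, not on the root stack), producing spurious degenerate fibres that violate relative minimality. The paper's Step 4 removes these by splitting off the $(-1)$-eigensheaf and descending the Azumaya algebra across such divisors; some substitute for that step is needed in your argument.
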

\begin{remark}
Embedded resolution of singularities is known in characteristic $0$ for all dimensions by Hironaka's celebrated result. In positive characteristic, embedded resolution of singularities for both curves and surfaces is known (cf. \cite{CJS-ResSing}). Since the resolutions commute with smooth base change, the assumptions about desingularization of reduced finite-type Deligne-Mumford stacks are also true in all of these cases (apply resolution of singularities to a presentation).
\end{remark}

Before embarking on the proof we make several observations of a general nature.
Let $k$ be a perfect field, let $S$ be a smooth projective algebraic variety over $k$, let $n$ be a positive integer such that $\chara(k) \nmid n$, and let $\alpha\in\Br(k(S))[n]$. Then there exists a dense open $U\subset S$ and $\beta \in \Br(U)$ such that $\alpha $ is the restriction of $\beta$. Taking $U$ to be maximal, by Lemma \ref{lemma_Brauer_codim2}, the complement $S \setminus U$ is a finite union of divisors.
\begin{assumption}
Assume embedded resolution of singularities for reduced subschemes of $S$ of pure codimension $1$.
\end{assumption}

Then, upon replacing $S$ by a smooth projective variety with birational morphism to $S$, we may suppose that the complement of $U$ is a simple normal crossing divisor $D_1 \cup \dots \cup D_\ell$. Let
\[
X := \sqrt[n]{(S,\{D_1,\dots,D_\ell\})},
\]
the iterated root stack of $S$ along the divisors $D_i$. We apply Lemma \ref{lemma_Brauer_root_stack} to the scheme obtained by gluing the local rings at the generic points of the $D_i$ along the generic point of $S$. Then \cite[\S VII.5, Thm 5.7]{SGA4-2} implies that $\alpha$ extends to an open neighborhood of the root stack over this scheme. Hence, by Lemma \ref{lemma_Brauer_codim2}, there is a unique $\beta\in\Br(X)[n]$ that restricts to $\alpha$.

Now suppose that $\alpha$ is the class of a central simple algebra $A$ of dimension $d^2$ as a $k(S)$-vector space, with $n\mid d$. Let $\beta_0\in H^2(X, \mu_n)$ be a lift of $\beta\in\Br(X)[n]$ with corresponding gerbe $G_0$ banded by $\mu_n$. Now $A$ is the fiber at the generic point of some sheaf of Azumaya algebras $\cA$ on an open $W\subset X$ with complement of codimension at least $3$ (\cite[Thm 2.1]{GB2}). The Brauer class of the pullback of $\cA$ to $W\times_X G_0$ is trivial, hence this pullback is the endomorphism algebra of a locally free coherent sheaf of rank $d$, which is the restriction of a coherent sheaf $\cE_0$ on $G_0$.
\begin{assumption}
Assume resolution of singularities for reduced Noetherian DM stacks of finite type over $k$ of pure dimension $\dim(S)$.
\end{assumption}
The identity of $\cE_0\vert{W\times_X G_0}$ induces a morphism to the Grassmannian of rank $d$ quotients of $\cE_0$ (\cite{NitsureQuot}). Apply resolution of singularities to the closure of the image to obtain a smooth DM stack $Y$ with a projective morphism to $X$ that restricts to an isomorphism over $W$, a gerbe $H_0:=Y\times_X G_0$, and a locally free coherent sheaf $\cF_0$ on $H_0$ whose restriction over $W$ is isomorphic to the restriction of $\cE_0$. In this situation, let $\gamma_0 := \beta_0\vert_Y$.

In the proof of Theorem \ref{thm_main} we specialize the above to $n=d=2$.

\begin{proof}
The proof begins with a series of reductions steps, starting with $Y$ as above, equipped with a sheaf of Azumaya algebras, restricting to the quaternion algebra $A$ over $k(S)$, associated with the generic fiber of $\pi$.

\emph{Step 1.} We may suppose that $Y\cong \sqrt{(T,\{E_1,\dots,E_m\})}$ for some smooth projective variety $T$ with birational morphism to $S$ and irreducible divisors $E_i$ such that $E_1\cup\dots\cup E_m$ is a simple normal crossing divisor and such that at the generic point of each $E_i$, the projective representation $\mu_2\to \PGL_2$ given by the sheaf of Azumaya algebras is nontrivial.
Indeed, the destackification program (\cite[Thm 1.2]{BerghDestackify}) yields a morphism $\widetilde{Y}\to Y$ that is a composition of blow-ups with smooth centers, such that
\[
\widetilde{Y} \cong \sqrt{(T,\{E_1,\dots,E_m\})}
\]
for a smooth projective variety $T$ and irreducible divisors $E_i$ such that $E_1\cup\dots\cup E_m$ is a simple normal crossing divisor.
We pull back the sheaf of Azumaya algebras to $\widetilde{Y}$. If there is an $i$ such that the projective representation $\mu_2\to \PGL_2$ over a general point of $E_i$ is trivial, the sheaf of Azumaya algebras descends to
\[
\sqrt{(T,\{E_1,\dots,\widehat{E_i},\dots,E_m\})}.
\]

\emph{Step 2.} We may suppose, additionally, that generically along every component of $E_i\cap E_{i'}$ for $i\ne i'$ the projective representation of $\mu_2\times \mu_2$ is faithful. Let $F\subset E_i\cap E_{i'}$ be an irreducible component with non-faithful representation. Let $\widetilde{T}$ be the blow-up of $T$ along $F$. For every $j\in\{1,\dots,m\}$, we denote the proper transform of $E_j$ by $\widetilde{E}_j$, and we denote the exceptional divisor of the blow-up by $E'$. We let $\widetilde{Y}$ be the normalization of $\widetilde{T} \times_T Y$.
Then $\widetilde{Y}$ is isomorphic to the blow-up of $Y$ at the corresponding component of the fiber product of the gerbes of the root stacks, which is itself isomorphic to the root stack
\[
\sqrt{(\widetilde{T},\{\widetilde{E}_1,\dots,\widetilde{E}_m,E'\})}.
\]
The projective representation over a general point of $E'$ is trivial, so the sheaf of Azumaya algebras descends to
\[
\sqrt{(\widetilde{T},\{\widetilde{E}_1,\dots,\widetilde{E}_m\})}.
\]

\emph{Step 3.} We may suppose, furthermore, that all triple intersections $E_i\cap E_{i'}\cap E_{i''}$ are empty, where $i$, $i'$, and $i''$ are distinct. Since there can never be more than $2$ independent commuting subgroups of order $2$ in $PGL_2$ (\cite{BeauvilleFinite}), the projective representation $(\mu_2)^3\to PGL_2$ over a general point of $E_i \cap E_{i'} \cap E_{i''}$ has kernel equal to the diagonal $\mu_2$.
We blow up $T$ along $E_i \cap E_{i'} \cap E_{i''}$ and proceed as in Step 2.

\emph{Step 4.} We may suppose, furthermore, that the Brauer class $[A]\in \Br(k(S))$ does not extend across the generic point of $E_i$ for any $i$. Assume that it does, for some $i$. Let 
\[
\widetilde{Y} := \sqrt{(T,\{E_1,\dots,\widehat{E_i},\dots,E_m\})}.
\]
Then by Lemma \ref{lemma_Brauer_codim2}, the Brauer class is restriction of an element $\delta \in \Br(\widetilde{Y})$. Let $\varepsilon\in H^2(\widetilde{Y}, \mu_2)$ denote an arbitrary lift of $\delta$, with corresponding gerbe $H_1$, such that if we let $H_0$ denote the base-change
\[
Y\times_{\widetilde{Y}} H_1,
\]
then on $H_0$ the sheaf of Azumaya algebras is identified with endomorphism algebra of some locally free coherent sheaf $\cF_0$.
Notice that $H_0$ is a root stack over $H_1$.
The relative stabilizer acts with eigenvalues $1$ and $-1$ on fibers of $\cF_0$.
The $(-1)$-eigensheaf is a quotient sheaf $\cL_{-1}$ supported on the gerbe of the root stack, such that the kernel $\cF_1$ in
\[
0\to \cF_1\to \cF_0\to \cL_{-1}\to 0
\]
is again locally free and descends to $H_1$, yielding a sheaf of Azumaya algebras on $\widetilde{Y}$.
\vspace{1em}

We now have $Y = \sqrt{(T,\{E_1,\dots,E_m\})}$, equipped with a sheaf of Azumaya algebras $\cA$, such that the projective representations at a general point of every $E_i$ are nontrivial, the projective representations at a general point of every intersection is faithful, there are no triple intersections, and such that the Brauer class does not extend over any of the generic points of the $E_i$. Let $P\to Y$ be the smooth $\bP^1$-fibration associated with $\cA$.

Let $T_0$ denote the complement of the intersections of pairs of divisors,
\[
T_0:=T\setminus \bigcup_{1\le i<i'\le m} E_i\cap E_{i'}.
\]
We apply \cite[Proposition 3.1]{KrTschBrauerSeveri} to $T_0\times_TP$ to obtain a regular conic bundle
\[
\pi_0\colon V_0\to T_0.
\]
This factors canonically through $\bP(\pi_{0{*}}(\omega_{V_0/T_0}^\vee))$.
Let $i:T_0\to T$ denote the inclusion. We claim that $i_*(\pi_{0{*}}(\omega_{V_0/T_0}^\vee))$
is a locally free coherent sheaf and, denoting this by $\cE$, the closure
$V=\overline{V_0}$ of $V_0$ in $\bP(\cE)$ is a regular conic bundle over $T$.
It suffices to verify these assertions after passing to an algebraic closure of $k$.
Then there is a unique faithful projective representation
$\mu_2 \times \mu_2\to \PGL_2$ (up to conjugacy), cf. \cite{BeauvilleFinite}.
So, by \cite[Lemma 2.8]{KrTschBrauerSeveri} after base change to a suitable
affine \'etale neighborhood $T'=\Spec(B')\to T$ of a given point of an intersection
$E_i\cap E_{i'}$, we have
\[
Y'\cong \sqrt{(T',\{E_i,E_{i'}\})}=[\Spec(B'[t,t']/(t^2-x,t'^2-x'))/\mu_2\times \mu_2],
\]
where $x$ and $x'$ are the respective defining equations for the preimage in
$T'$ of $E_i$ and $E_{i'}$, with $P'$ obtained by pulling back
$[\bP^1/\mu_2\times \mu_2]$. Here, on $B'[t,t']/(t^2-x,t'^2-x')$, the action
of the factors of $\mu_2\times \mu_2$ is by respective scalar multiplication of
$t$ and $t'$, while the action on $\bP^1$ corresponds to the faithful projective
representation $\mu_2 \times \mu_2\to \PGL_2$. Over $T'$ we compute
$\overline{V'_0}\cong\Proj(B'[u,v,w]/(xu^2+x'v^2-w^2))$, which is regular.

The fact that the Brauer class does not extend across the generic point of any $E_i$ ensures that the conic bundle $V\to T$ is standard.
\end{proof}
\begin{remark}
While the general destackification process outlined in \cite{BerghDestackify} requires stacky blow-ups, it is never necessary to take root stacks when all stabilizers are powers of $\mu_2$.
\end{remark}

\bibliographystyle{hplain}
\bibliography{bib}

\end{document}